\newcommand{\E}{\mbox{$\rm E$}}
\newcommand{\qq}{q}
\newcommand{\hf}{\textstyle{\frac{1}{2}}}
\newcommand{\Ai}{\mbox{\rm Ai}}
\newcommand\BibTeX{{\rmfamily B\kern-.05em \textsc{i\kern-.025em b}\kern-.08em
T\kern-.1667em\lower.7ex\hbox{E}\kern-.125emX}}
\newcommand\MiKTeX{{\rmfamily M\kern-.05em \textsc{i\kern-.025em K}\kern-.08em
T\kern-.1667em\lower.7ex\hbox{E}\kern-.125emX}}
\newcommand\PracTeX{{\rmfamily P\kern-.05em \textsc{r\kern-.025em a\kern-.025em
c}\kern-.08em
T\kern-.1667em\lower.7ex\hbox{E}\kern-.125emX}}
\theoremstyle{plain}
\newtheorem{thm}{Theorem}
\begin{document}

\title{Tail sums of Wishart and GUE eigenvalues beyond the bulk
  edge.}
\author{Iain M. Johnstone\thanks{Thanks
to David Donoho and Matan Gavish for discussions and help.
Peter Forrester provided an important reference. 
Work on this paper was completed during a visit to the Research
School of Finance, Actuarial Studies and Statistics, A. N. U.,
whose hospitality 
and support is gratefully acknowledged.
Work supported in part by the U.S. National Science
Foundation and the National Institutes of Health.} \\
Stanford University and Australian National University
}
\date{\today}

\maketitle

\begin{abstract}
Consider the classical Gaussian unitary ensemble of size $N$ and
the real Wishart ensemble $W_N(n,I)$.
In the limits as $N \to \infty$
and $N/n \to \gamma > 0$, 
the expected number of eigenvalues that
exit the upper bulk edge is less than one, 0.031 and 0.170
respectively, the latter number being independent of $\gamma$.
These statements are consequences of quantitative bounds on tail sums
of eigenvalues outside the bulk which are
established here for applications in high 
dimensional covariance matrix estimation. \\
\end{abstract}


\section{Introduction}
\label{sec:introduction}

This paper develops some tail sum bounds on eigenvalues
outside the bulk that are needed for results on estimation of
covariance matrices in the spiked model,
\citet{dgj15}.
This application is described briefly in Section
\ref{sec:appl-covar-estim}. It depends on properties of the
eigenvalues of real \textit{white} Wishart matrices,
distributed as $W_N(n, I)$, which
are the main focus of this note.

Specifically, suppose that $A \sim W_N(n,I)$, and 
that $\lambda_1 \geq \ldots \geq \lambda_N$ are eigenvalues of the sample
covariance matrix $n^{-1} A$.
In the limit $ N/n \to \gamma > 0$, it is well known that the
empirical distribution of $\{ \lambda_i \}$ converges to 
the Marcenko-Pastur law
(see e.g. \citet[Corollary 7.2.5]{pash11}), which is supported on an
interval $I_\gamma$ 
--- augmented with $0$ if $\gamma > 1$ ---
having upper endpoint $\lambda(\gamma) = (1 + \sqrt{\gamma})^2$. We focus
on the eigenvalues $\lambda_i$ that exit this ``bulk''
interval $I_\gamma$ on the upper side. 
In statistical application, 
such exiting eigenvalues might
be mistaken for ``signal'' and so it is useful to have some bounds on
what can happen under the null hypothesis of no signal.
Section \ref{sec:real-wishart-case} studies
the mean value behavior of  quantities such as
\begin{equation*}
  T_N = \sum_{i=1}^N [\lambda_i - \lambda(\gamma)]_+^q, \qquad q \geq 0
\end{equation*}
which for $q = 0$ reduces to the number $T_N^0$ of exiting eigenvalues.

It is well known that the largest eigenvalue $\lambda_1 \stackrel{\rm 
  a.s.}{\to} \lambda(\gamma)$ \citep{gema80},
and that closed intervals outside the bulk support contain no
eigenvalues for $N$ large with probability one \citep{basi98}.
However these and even large deviation results for 
$\lambda_1$ \citep{mave09}
and $T_N^0$ \citep{mavi12}
seem not to directly yield the information on $\E (T_N)$ that we need.
\citet{mmsv14} looked at the \textit{variance} of $T_N^0$ using
methods related to those of this note.
Recently, \citet{chia17} has studied the probability that all eigenvalues
of Gaussian, Wishart and double Wishart random matrices lie within the bulk, and
derived universal limiting values of 0.6921 and 0.9397 in the real and complex
cases respectively.

In summary, the motivation for this note is high-dimensional
covariance estimation, but there are noteworthy byproducts:
the asymptotic values of $\E (T_N^0)$ are perhaps suprisingly small, and
numerically for the Gaussian Unitary Ensemble (GUE), 
it is found that the chance of even two exiting
eigenvalues is \textit{very} small, of order $10^{-6}$.

\section{The Gaussian Unitary Ensemble Case (GUE)}

We begin with GUE  to illustrate the methods in the simplest
setting, and to note an error in the literature. 
Recall that the Gaussian Unitary ensemble GUE($N$) is the Gaussian probability
measure on the space of $N \times N$ Hermitian matrices with density
proportional to $\exp \{ - \hf N \text{tr} A^2 \}$.

\begin{thm}
Let $\lambda_1,\ldots,\lambda_N$ be eigenvalues of an $N$-by-$N$ matrix from the
GUE. Denote by $\lambda_+=2$ the upper edge of the Wigner semicircle, namely,
the asymptotic density of the eigenvalues. For $ \qq \geq 0$, let
\begin{eqnarray}
    T_N =  \sum_{i=1}^N (\lambda_i-\lambda_+)_+^{\qq}\,.
\end{eqnarray}
Then, with a constant $c_\qq$ specified at \eqref{eq:cqdef} below,
\[
\E (T_N) = c_\qq N^{-2\qq/3}(1 + o(1)).
\]
In particular, for $\qq = 0$ and $T_N = \# \{ i ~:~ \lambda_i >
\lambda_+ \}$,
\begin{equation}  \label{eq:result}
  \E (T_N) \to c_0 = \frac{1}{6 \sqrt{3} \pi} \approx 0.030629.
\end{equation}

\end{thm}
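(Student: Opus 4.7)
The plan is to write $\E(T_N)$ as an integral against the one-point correlation function (mean eigenvalue density) $\rho_N$ of GUE,
\[
\E(T_N) = \int_{\lambda_+}^\infty (x - \lambda_+)^{\qq}\,\rho_N(x)\,dx,
\]
and then invoke the Airy-kernel scaling at the soft edge. In the normalization $\exp(-\hf N\,\text{tr}(A^2))$ used here, standard Plancherel--Rotach asymptotics for the Hermite kernel give the pointwise limit
\[
N^{-2/3}\,\rho_N\bigl(\lambda_+ + s\,N^{-2/3}\bigr) \longrightarrow K_{\rm Ai}(s,s) := \Ai'(s)^2 - s\,\Ai(s)^2
\]
for each $s \in \mathbb{R}$. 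Changing variables $x = \lambda_+ + s\,N^{-2/3}$ yields
\[
N^{2\qq/3}\,\E(T_N) = \int_0^\infty s^{\qq}\,\bigl[N^{-2/3}\,\rho_N(\lambda_+ + s\,N^{-2/3})\bigr]\,ds,
\]
and the theorem follows with $c_\qq = \int_0^\infty s^{\qq}\,K_{\rm Ai}(s,s)\,ds$ provided one may pass to the limit under the integral.

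I expect the principal obstacle to be precisely this interchange. On any bounded window $s \in [0,M]$, pointwise convergence together with local uniform Hermite kernel bounds suffices. For the tail $s \geq M$ one needs a super-exponential majorant of the form $N^{-2/3}\rho_N(\lambda_+ + s\,N^{-2/3}) \leq C\,e^{-c\,s^{3/2}}$, uniform in $N$ for $N$ large; this matches the known decay $K_{\rm Ai}(s,s) \sim (8\pi s)^{-1}e^{-(4/3)s^{3/2}}$ (a cancellation of leading terms in the individual Airy asymptotics) and provides an $s^{\qq}$-integrable dominator for every $\qq \geq 0$. Such tail bounds on the Hermite kernel just outside the spectrum are standard consequences of Plancherel--Rotach asymptotics in the exterior regime.

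Finally, for $\qq=0$, evaluate $c_0$ analytically. Direct differentiation combined with the Airy equation $\Ai''(s) = s\,\Ai(s)$ yields the identity
\[
\frac{d}{ds}K_{\rm Ai}(s,s) = -\Ai(s)^2.
\]
Integration by parts (noting $s\,K_{\rm Ai}(s,s) \to 0$ at infinity) gives $c_0 = \int_0^\infty s\,\Ai(s)^2\,ds$. A second integration by parts using $s\,\Ai(s) = \Ai''(s)$ then produces
\[
c_0 = -\Ai(0)\,\Ai'(0) - \int_0^\infty \Ai'(s)^2\,ds,
\]
while from the definition of $K_{\rm Ai}$ one also has $\int_0^\infty \Ai'(s)^2\,ds = c_0 + \int_0^\infty s\,\Ai(s)^2\,ds = 2c_0$. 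Combining yields $3c_0 = -\Ai(0)\,\Ai'(0)$. The explicit values $\Ai(0) = 3^{-2/3}/\Gamma(2/3)$ and $\Ai'(0) = -3^{-1/3}/\Gamma(1/3)$, together with the Euler reflection $\Gamma(1/3)\Gamma(2/3) = \pi/\sin(\pi/3) = 2\pi/\sqrt{3}$, give $-\Ai(0)\Ai'(0) = \sqrt{3}/(6\pi)$, so that $c_0 = 1/(6\sqrt{3}\,\pi)$ as stated.
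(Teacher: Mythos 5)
Your proposal is correct and follows essentially the same route as the paper: write $\E (T_N)$ as an integral of the one-point function, pass to the Airy kernel under the edge scaling $\lambda_+ + s N^{-2/3}$ with a dominated-convergence argument (the paper cites Tracy--Widom for the majorant $S_{\tau_N}(x,x)\leq M^2 e^{-2x}$, which already suffices, so your super-exponential tail bound is stronger than strictly needed), and reduce to $c_0=\int_0^\infty s\,\Ai^2(s)\,ds$. The one genuine difference is how that integral is evaluated: the paper quotes the closed form \citet[9.11.15]{Olver:2010:NHMF} for $\int_0^\infty x^{\qq+1}\Ai^2(x)\,dx$, valid for all $\qq$, whereas you derive $3c_0=-\Ai(0)\Ai'(0)$ via two integrations by parts together with the self-consistency relation $\int_0^\infty \Ai'(s)^2\,ds=2c_0$ and Euler reflection --- a nice self-contained computation, though it yields only the $\qq=0$ constant and not the general closed form for $c_\qq$ given at \eqref{eq:cqdef}.
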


\begin{proof}
    We use the so-called {\em one-point function} and bounds due
to \cite{trwi94,trwi96}.
To adapt to their
notation, let $(y_i)_1^N$ be the eigenvalues of GUE with joint density
proportional to $\exp (-\sum_1^n y_i^2 ) \Delta^2(y)$, where
$\Delta(y)$ is the usual Vandermonde. In this scaling the eigenvalue
bulk concentrates as the semi-circle on $[-\mu_N, \mu_N]$ with $\mu_N
= \sqrt{2N}$. 

We have $y_i = \sqrt{N/2} \, \lambda_i$ and $\mu_N = \sqrt{N/2} \,
\lambda_+$, for $\lambda_+ = 2$, so that
\begin{equation*}
  T_N = \sum_1^N (\lambda_i - \lambda_+)_+^{\qq}
      = \Big( \frac{2}{N} \Big)^{\qq/2} \sum_1^N (y_i-\mu_N)_+^{ \qq}.
\end{equation*}

From the determinantal structure of GUE, the marginal density of a
single (unordered) eigenvalue $y_i$ is given by the one-point function
\begin{equation*}
  N^{-1} S_N(y,y) = N^{-1} \sum_{k=0}^{N-1} \phi_k^2(y),
\end{equation*}
where $\phi_k(y)$ are the (Hermite) functions obtained by
orthonormalizing $y^k e^{-y^2/2}$. Thus
\begin{equation*}
  \E (T_N) = \Big( \frac{2}{N} \Big)^{\qq/2} \int_{\mu_N}^\infty
  (y-\mu_N)^{\qq} S_N(y,y) dy.
\end{equation*}
Now introduce the TW scaling
\begin{equation*}
  y = \mu_N + \tau_N x, \qquad \qquad \tau_N = \frac{1}{\sqrt{2} N^{1/6}},
\end{equation*}
and let $\text{Ai}$ denote the Airy function.
\citet[p 745-6]{trwi96} show that
\begin{align*}
  S_{\tau_N}(x,x) 
   & = \tau_N S_N( \mu_N + \tau_N x, \mu_N + \tau_N x) \\
   & \to K_A(x,x) = \int_0^\infty \text{Ai}^2(x+z) dz,
\end{align*}
with the convergence being dominated: $S_{\tau_N}(x,x) \leq M^2
e^{-2x}$.
Consequently,
\begin{align*}
  \E (T_N) 
   & = \Big( \frac{2 \tau_N^2}{N} \Big)^{\qq/2} \int_0^\infty x^{\qq}
   S_{\tau_N}(x,x) dx \\
   & \sim N^{-2 \qq/3} \int_0^\infty x^{\qq} K_A(x,x) dx.
\end{align*}
In particular, $\E (T_N) = O(N^{-2 \qq/3})$, and if $\qq = 0$, then
$\E (T_N)$ converges to a positive constant.

Integration by parts and 
\citet[9.11.15]{Olver:2010:NHMF}  yield
\begin{equation}
  \label{eq:cqdef}
  \begin{split}
  c_\qq & =  \int_0^\infty x^{\qq} K_A(x,x) dx  
      = \int_0^\infty x^\qq \int_x^\infty \Ai^2(z) dz dx  \\
      &  = \frac{1}{\qq+1} \int_0^\infty x^{\qq + 1} \Ai^2(x) dx 
      =  \frac{ 2 \Gamma(\qq + 1)}{\sqrt{\pi} 12^{(2\qq +
          9)/6} \Gamma((2\qq+9)/6)}
  \end{split}
\end{equation}
For $\qq=0$ the constant becomes $c_0 = 1/(6 \sqrt{3} \pi)$. 
\end{proof}

\begin{table}[h]
  \centering
  \caption{For GUE$(N)$, the probabilities $p_N(k)$ of exactly $k$ eigenvalues
  exceeding the upper bulk edge $\sqrt{2N}$, along with the expected
  number $\E (T_N)$, to be compared with limiting value \eqref{eq:result}.}
\begin{tabular}[h]{ccccc}
\quad  $N$ \quad \qquad  & \qquad  $p_N(1)$ \qquad \ & \qquad  $p_N(2)$ \qquad \ & \qquad
  $p_N(3)$ \qquad \  &\qquad  $\E (T_N)$ \\[8pt] 
10  &$2.868\cdot10^{-2} $& $1.36\cdot10^{-6}$& $6.9\cdot10^{-14}$&   0.028681 \\
25  &$2.955\cdot10^{-2} $& $1.70\cdot10^{-6}$& $1.4\cdot10^{-13}$&   0.029551 \\
50  &$2.994\cdot10^{-2} $& $1.88\cdot10^{-6}$& $1.9\cdot10^{-13}$&   0.029944 \\
100 &$3.019\cdot10^{-2} $& $2.00\cdot10^{-6}$& $2.3\cdot10^{-13}$&   0.030195 \\
250 &$3.039\cdot10^{-2} $& $2.09\cdot10^{-6}$& $2.6\cdot10^{-13}$&   0.030392 \\
500 &$3.048\cdot10^{-2} $& $2.14\cdot10^{-6}$& $2.8\cdot10^{-13}$&   0.030480
\end{tabular}
  \label{tab:born}
\end{table}

\bigskip
\textbf{Remarks.} \ 
1. \citet{ulla83} states, in our notation, that the expected number of
eigenvalues above the bulk edge, $\E (T_N) \sim 0.25 N^{-1/2}$. 
This claim cannot be correct: a  counterexample uses the
limiting law $F_2$ for $y_{(1)} = \max_i y_i$ of \citet{trwi94}:
\begin{equation}
  \label{eq:TW}
  \E (T_N) \geq
  \Pr ( y_{(1)} > \sqrt{2N} ) 
  \to 1 - F_2(0) = 0.030627.
\end{equation}

We evaluated 
numerically in \textsf{Mathematica}
the  formulas (U3), (U6) and (U7) for $p = (2/N) \E (T_N)$ given in 
\citet{ulla83}.
While numerical results from intermediate formula (U3) are consistent with our 
(\ref{eq:result}), neither those from (U6) nor those from the final
result (U7) are consistent with (U3), or indeed with each other!

2. The striking closeness of the right side of (\ref{eq:TW}) to 
(\ref{eq:result}) led us to use the \textsf{Matlab} toolbox of
\citet{born10} to evaluate numerically
\begin{equation*}
  p_N(k) 
  = \Pr ( \text{ exactly } k \text{ of } \{y_i\} > \sqrt{2N} ) 
  = E_2^{(n)}(k, J)
\end{equation*}
with $J = (\sqrt{2N},\infty)$, in the notation of \citet{born10}.
The results, in Table \ref{tab:born}, confirm that the probability of
2 or more eigenvalues exiting the bulk is very small, of order
$10^{-6}$, for all $N$. 
This is also suggested by the plots of the densities of $y_{(1)},
y_{(2)}, \ldots$ in the scaling limit in Figure 4 of \citet{born10},
which itself extends Figure 2 of \citet{trwi94}.

\section{The real Wishart case}
\label{sec:real-wishart-case}

Suppose $\lambda_i$ are eigenvalues of $n^{-1} X X^\top$ for $X$ a $N
\times n$ matrix with i.i.d. ${\rm N}(0,1)$ entries. Assume that
$\gamma_N = N/n \to \gamma \in (0,1]$.
Set $\lambda(\gamma) = (1 + \sqrt \gamma)^2$. 

We recall the scaling for the Tracy-Widom law from the largest
eigenvalue $\lambda_1$:
\begin{equation*}
  \lambda_1 = \lambda(\gamma_N) + N^{-2/3} \tau(\gamma_N) W_N
\end{equation*}
where $W_N$ converges in distribution to $W \sim TW_1$ and 
$\tau(\gamma) = \sqrt \gamma ( \sqrt \gamma + 1)^{4/3}.$

\bigskip

\begin{thm}
  \label{prop:wish-case}
(a) Suppose $\eta(\lambda,c) \geq 0$ is jointly continuous in
$\lambda$ and $c$, and satisfies
\begin{alignat*}{2}
  \eta(\lambda, c) \
    & = \ 1 \qquad \quad && \text{for } \lambda \leq \lambda(c) \\
  \eta(\lambda, c) \ 
    &  \leq \ M \lambda && \text{for some } M \ \text{and all } \lambda.
\end{alignat*}
Suppose also that $c_N - \gamma_N = O(N^{-2/3})$. Then for
$q > 0$, 
\begin{equation}
  \label{eq:a-cge}
  \E \bigg( \sum_{i =1}^N [\eta(\lambda_i, c_N) - 1]^q \bigg) \to 0.
\end{equation}
(b) Suppose $c_N - \gamma_N \sim s \sigma(\gamma) N^{-2/3}$, where 
$\sigma(\gamma) = \tau(\gamma)/\lambda'(\gamma)
= \gamma (1 + \sqrt \gamma)^{1/3}$.
Then 
\begin{equation} \label{eq:cge-b}
  \E \bigg( \sum_{i =1}^N [\lambda_i - \lambda(c_N)]_+^q \bigg) 
      \sim \tau^q(\gamma) N^{-2q/3} \int_s^\infty (x-s)_+^q K_1(x,x) dx.
\end{equation}
where $K_1$ is defined at \eqref{eq:K1} below.

\noindent(c) In particular, let $N_n = \# \{i ~:~ \lambda_i \geq \lambda(c_N)
\}$ and suppose that $c_N - \gamma_N = o(N^{-2/3})$. Then
\begin{equation*}
   \E N_n \to c_0
          = \int_0^\infty K_1(x,x) dx \approx 0.17.
\end{equation*}
\end{thm}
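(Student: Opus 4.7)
The plan is to parallel the GUE argument from the previous section. For each of (a), (b), (c) the expectation is rewritten as an integral of the one-point correlation function $\rho_N$ of the real white Wishart ensemble, the Tracy-Widom soft-edge change of variables is applied, and the limit is taken via dominated convergence using the known convergence of the rescaled LOE density to the soft-edge density $K_1(x,x)$.

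The crux is part (b). Introducing the scaling
\[
\lambda = \lambda(\gamma_N) + \tau(\gamma_N) N^{-2/3} x, \qquad
s_N = \frac{\lambda(c_N) - \lambda(\gamma_N)}{\tau(\gamma_N) N^{-2/3}},
\]
and writing $R_N(x) = \tau(\gamma_N) N^{-2/3} \rho_N\!\bigl(\lambda(\gamma_N) + \tau(\gamma_N) N^{-2/3} x\bigr)$ for the rescaled density, the expectation on the left of \eqref{eq:cge-b} becomes
\[
\tau^q(\gamma_N)\, N^{-2q/3} \int_{s_N}^\infty (x - s_N)^q R_N(x)\, dx.
\]
Under the hypothesis $c_N - \gamma_N \sim s\,\sigma(\gamma) N^{-2/3}$, the identities $\lambda'(\gamma) = (1+\sqrt{\gamma})/\sqrt{\gamma}$ and $\sigma(\gamma) = \tau(\gamma)/\lambda'(\gamma)$ give $s_N \to s$. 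I would then invoke the pointwise convergence $R_N(x) \to K_1(x,x)$ at the LOE soft edge, together with a uniform-in-$N$ exponential majorant $R_N(x) \leq C e^{-cx}$ valid on $x \geq s_N$, to justify dominated convergence and obtain \eqref{eq:cge-b}.

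Part (c) is the specialization $q = 0$, $s = 0$: the hypothesis $c_N - \gamma_N = o(N^{-2/3})$ forces $s_N \to 0$, and (b) yields $\E N_n \to \int_0^\infty K_1(x,x)\,dx$; the numerical value $\approx 0.17$ is obtained from standard tabulations of the soft-edge LOE density, e.g.\ using the toolbox of \citet{born10}. For part (a), the conditions on $\eta$ kill the summand unless $\lambda_i > \lambda(c_N)$; by joint continuity and $\eta(\lambda(c),c) = 1$, the integrand $[\eta(\lambda(\gamma_N)+\tau(\gamma_N) N^{-2/3} x, c_N) - 1]^q$ tends pointwise to $0$ after rescaling, while the linear bound $\eta \leq M\lambda$ keeps it polynomially bounded in $x$; the same exponential majorant on $R_N$ then gives \eqref{eq:a-cge} via dominated convergence, noting that the prefactor $N^{-2q/3}$ of the rescaled integral in (b) is absent here because $\eta - 1$ itself supplies the smallness.

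The main obstacle is establishing the uniform-in-$N$ exponential majorant on $R_N$. In the GUE case this follows from the Christoffel-Darboux identity and classical Plancherel-Rotach estimates for Hermite functions, as used in \citet{trwi94,trwi96}. For real Wishart the one-point density is an entry of a $2 \times 2$ matrix correlation kernel built from skew-orthogonal Laguerre polynomials, and securing clean off-the-shelf uniform bounds at the soft edge is the delicate technical step I would need to nail down carefully before the dominated convergence arguments above can be run rigorously.
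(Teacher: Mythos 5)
Your proposal is correct and follows essentially the same route as the paper: express the expectation through the one-point correlation function, apply the Tracy--Widom soft-edge scaling, and pass to the limit by dominated convergence against $K_1(x,x)$, with the pointwise vanishing of $\eta-1$ driving part (a) and the rescaled $(x-s)^q$ factor driving part (b). The one step you flag as delicate --- the uniform-in-$N$ exponential majorant for the rescaled LOE one-point function --- is handled in the paper by citing the representation $R_1(y) = S(y,y) + \psi(y)(\epsilon\phi)(y)$ together with the uniform bounds $\phi_\tau(x), \psi_\tau(x) = O(e^{-x})$ from \citet{john00c}, and the paper additionally evaluates $c_0$ in closed form as $1/(6\sqrt{3}\pi) + 5/36$ via Airy integrals rather than by numerical tabulation.
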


\textit{Remarks.}
1. Part (b) represents a sharpening of \eqref{eq:a-cge} that is relevant
when $\eta(\lambda) = \eta(\lambda,\gamma)$ is H\"older continuous in
$\lambda$ near the bulk edge $\lambda(\gamma)$,
$$ \eta(\lambda) - \eta(\lambda(\gamma)) \sim (\lambda -
\lambda(\gamma))_+^\qq.$$
The example $\qq = 1/2$ occurs commonly for optimal shrinkage rules
$\eta^*(\lambda)$ in \citet{dgj15}.

2. Section \ref{sec:appl-covar-estim} explains
 why we allow $c_N$ to differ from $\gamma_N$.

\begin{proof}
  Define
\begin{equation*}
  T_N = \sum_{i=1}^N F(\lambda_i,c_N), \qquad \qquad 
  F(\lambda,c) =
  \begin{cases}
    [\eta(\lambda,c) -  1]^q  &  (a) \\
    [\lambda - \lambda(c)]_+^q         &  (b).
  \end{cases}
\end{equation*}
We adapt the discussion here
 to the notation used in \citet{trwi98} and \citet{john00c}.
Let $(y_i)_1^N = n \lambda_i$ be the eigenvalues of $W_N(n,I)$ with
joint density 
function $P_N(y_1, \ldots, y_N)$ with explicit form given, for
example, in \cite[eq. (4.1)]{john00c}.
We obtain
\begin{equation*}
  \E (T_N) = \int_0^\infty F(y/n, c_N) R_1(y) dy, 
\end{equation*}
where $R_1(y_1) = N \int_{(0,\infty)^{N-1}} P_N(y_1, \ldots, y_N) dy_2
\cdots dy_N$ is the one-point (correlation) function. 
It follows from \citet[p814--16]{trwi98} that
\begin{equation}
    \label{eq:R1}
  R_1(y) = T_1(y) = \hf \text{tr } K_N(y,y)
\end{equation}
where $K_N(x,y)$ is the $2 \times 2$ matrix kernel associated with
$P_N$, see e.g. \cite[eq. (3.1)]{trwi98}. 
It follows from \citet{wido99} that
\begin{equation} \label{eq:S1}
         \hf \text{tr } K_N(y,y) 
       = S(y,y) + \psi(y) (\epsilon \phi)(y)
       =  S_1(y,y),
\end{equation}
where the functions $S(y,y'), \psi(y)$ and $\phi(y)$  are defined in terms of orthonormalized Laguerre
polynomials in \cite{wido99} and studied further in \cite{john00c}.
The function $\epsilon(x) = \hf \text{sgn} x$ and the operator
$\epsilon$ denotes convolution with the kernel $\epsilon(x-y)$.
\end{proof}
For convergence, introduce the Tracy-Widom scaling
\begin{equation*}
  y = \mu_N + \sigma_N x,
\end{equation*}
where we set $N_h = N + \hf$ and  $n_h = n + \hf$ and define
\begin{equation*}
  \mu_N = (\sqrt{N_h} + \sqrt{n_h})^2, \qquad
  \sigma_N = c(N_h/n_h) N_h^{1/3},
\end{equation*}
where $c(\gamma) = (1 + \sqrt \gamma)^{1/3} (1 + 1/\sqrt \gamma) 
=  (1 + \sqrt \gamma)^{1/3} \lambda'(\gamma)$
We now rescale the scalar-valued function \eqref{eq:S1}:
\begin{equation*}
  S_{1 \tau} (x, x)
     = \sigma_N S_1(\mu_N + \sigma_N x, \mu_N + \sigma_N x).
\end{equation*}
We can rewrite our target $\E (T_N)$ using \eqref{eq:R1}, \eqref{eq:S1}
and this rescaling in the form
\begin{equation*}
  \E (T_N) =  \int_{\delta_N}^\infty 
      F(\ell_N (x), c_N)
                 S_{1 \tau}(x,x) dx,
\end{equation*}
where $\ell_N(x) = (\mu_N + \sigma_N x)/n$, 
$\delta_N = (n \lambda(c_N) - \mu_N)/\sigma_N$ and we used the fact that
$F(\lambda,c) = 0$ for $\lambda \leq \lambda(c)$. 

It follows from 
\cite[eq. (3.9)]{john00c} that
\begin{equation*}
  S_{1 \tau} (x, x)
    = 2 \int_0^\infty \phi_\tau(x+u) \psi_\tau(x+u) du
        + \psi_\tau(x) \left[ c_\phi - \int_x^\infty \phi_\tau(u) du
        \right]. 
\end{equation*}
It is shown in equations (3.7), 3.8) and Sec. 5 of that paper that
\begin{equation*}
  \phi_\tau(x), \psi_\tau(x) \to \frac{1}{\sqrt 2} \Ai(x)
\end{equation*}
and, uniformly in $N$ and in intervals of $x$ that are bounded below, that
\begin{equation*}
  \phi_\tau(x), \psi_\tau(x) = O(e^{-x}).
\end{equation*}
Along with $c_\phi \to 1/\sqrt{2}$ (cf. App. A7 of same paper),
this shows that
\begin{equation} \label{eq:K1}
  S_{1 \tau}(x, x) 
    \to K_1(x, x)
    = \int_0^\infty \text{Ai}^2(x+z) dz
 + \tfrac{1}{2} \Ai(x) \left[ 1 - \int_x^\infty \Ai(z) dz \right] > 0
\end{equation}
with the convergence being dominated
\begin{equation}
\label{eq:S-dom}
  S_{1 \tau}(x, x) \leq M^2 e^{-2x} + M' e^{-x}.
\end{equation}

Before completing the argument for  (a) -- (c), we note it is
easily checked that
\begin{equation}
  \label{eq:muNbd}
  n^{-1} \mu_N = \lambda(\gamma_N) + O(N^{-1}),
\end{equation}
so that
\begin{equation*}
  \delta_N = \frac{n}{\sigma_N}[ \lambda(c_N) - \lambda(\gamma_N)] +
  O(N^{-1/3}). 
\end{equation*}
If $c_N - \gamma_N = \theta_N N^{-2/3}$ for $\theta_N = O(1)$ then
\begin{equation*}
  \delta_N \sim \frac{n}{\sigma_N} N^{-2/3} \theta_N \lambda'(\gamma) 
   \sim \theta_N/\sigma(\gamma),
\end{equation*}
since we have
\begin{equation}
  \label{eq:asyrel}
  N^{2/3} \sigma_N/n \sim \sigma(\gamma) \lambda'(\gamma) = \tau(\gamma).
\end{equation}

In case (a), then, 
$\delta_N \geq - A$ for some $A$. 
We then have $\ell_N(x) \to \lambda(\gamma)$ for all $x \geq -A$, and so 
from joint continuity
\begin{equation*}
  \eta( \ell_N(x), c_N) \to \eta(\lambda(\gamma),
  \gamma) = 1, 
\end{equation*}
and hence for all $x \geq -A$,
\begin{equation}
  \label{eq:Fto0}
  F( \ell_N (x), c_N) 
  = [ \eta(\ell_N (x), c_N) - 1]^q 
  \to 0
\end{equation}
The convergence is dominated since the assumption $\eta(\lambda, c)
\leq M \lambda$ implies that
$  |F (\ell_N (x), c_N)| \leq C(1 + |x|^q).$
Hence the convergence \eqref{eq:Fto0} along with \eqref{eq:S-dom} and
the dominated convergence theorem implies \eqref{eq:a-cge}.

For case (b),
\begin{equation*}
  N^{2q/3} \E (T_N) 
  = \int_{\delta_N}^\infty [ N^{2/3}(\ell_N(x) - \lambda(c_N))]_+^q
  S_{1 \tau} (x,x) dx. 
\end{equation*}
Observe that
\begin{equation*}
  N^{2/3}(\lambda(\gamma_N) - \lambda(c_N))
   \sim N^{2/3} \lambda'(\gamma) (\gamma_N - c_N)
   \sim - s \tau(\gamma),
\end{equation*}
and so from \eqref{eq:muNbd} and \eqref{eq:asyrel}, we have
\begin{equation}
  \label{eq:dom-b}
  N^{2/3}(\ell_N(x) - \lambda(c_N)) 
   = O(N^{-1/3}) + N^{2/3}(\lambda(\gamma_N) - \lambda(c_N)) + N^{2/3}
   n^{-1} \sigma_N x 
   \sim \tau(\gamma) (x - s).
\end{equation}
In addition, from \eqref{eq:dom-b}, we have
\begin{equation*}
  N^{2/3} |\ell_N(x) - \lambda(c_N) | \leq M (1 + |x|),
\end{equation*}
so that the convergence is dominated and \eqref{eq:cge-b} is proven.

For case (c), we have only to evaluate
\begin{equation*}
  c_0 = \int_0^\infty K_1(x,x) dx
     = \int_0^\infty K_A(x,x) dx  + \tfrac{1}{4} \int_0^\infty G'(x)
     dx 
     = I_1 + I_2,
\end{equation*}
where $I_1$ was evaluated in the previous section and 
$G(x) = [1 - \int_x^\infty \Ai(z) dz ]^2$. 
Since $\int_0^\infty \Ai(z) dz = 1/3$, from
\citet[9.10.11]{Olver:2010:NHMF}, we obtain
\begin{equation*}
  4 I_2 = G(\infty) - G(0) = 1 - (2/3)^2 = 5/9,
\end{equation*}
with the result
\begin{equation*}
  c_0 = \frac{1}{6 \sqrt{3} \pi} + \frac{5}{36} \approx 0.031 + 0.139 
      = 0.16952.
\end{equation*}

\section{Application to covariance estimation}
\label{sec:appl-covar-estim}

We indicate how Theorem \ref{prop:wish-case} is applied to
covariance estimation in the spiked model studied in \citet{dgj15}. 
Consider a sequence of statistical problems indexed by 
dimension $p$ and sample size $n$.
In the $n$th problem $\check X \sim {\rm N}_p(0,\Sigma)$ where $p = p_n$
sastisfies $p_n/n \to \gamma  \in (0,1]$ and the population covariance
matrix $\Sigma = \Sigma_p$ has fixed ordered eigenvalues $\ell_1 \geq \ldots
\geq \ell_r > 1$ for all $n$, and then $\ell_{r+1} = \ldots = \ell_{p_n} =
1$. 

Suppose that the sample covariance matrix $\check S = \check
S_{n,p_n}$ has eigenvalues
$\check \lambda_1 \geq \ldots \geq \check \lambda_p$ and corresponding
eigenvectors $v_1, \ldots, v_p$.
Consider shrinkage estimators of the form
\begin{equation}
  \label{eq:SigHat}
  \hat \Sigma_\eta = \sum_{j=1}^p \eta(\check \lambda_j, c_p) v_j v_j^\top,
\end{equation}
where $\eta(\lambda,c)$ is a \textit{continuous bulk shrinker}, 
that is, satisfies the conditions (a) of Theorem \ref{prop:wish-case}.
Without loss of generality, as explained in the reference cited, we
may also assume 
that $\lambda \to \eta(\lambda,c)$ is non-decreasing.
In the spiked model, the typical choice for $c_p$ in practice would be
to set $c_p = p/n$, and we adopt this choice below.

It is useful to analyse an ``oracle'' or ``rank-aware''
variant of \eqref{eq:SigHat} which
takes advantage of the assumed structure of $\Sigma_p$, especially the
fixed rank $r$ of $\Sigma_p - I$:
\begin{equation*}
  \hat \Sigma_{\eta,r} = \sum_{j=1}^r \eta(\check \lambda_j, c_p) v_j
  v_j^\top + \sum_{j=r+1}^p  v_j v_j^\top.
\end{equation*}

The error in estimation of $\Sigma$ using $\hat \Sigma$ is measured by
a loss function $L_p(\Sigma, \hat \Sigma)$.
One seeks conditions under which the losses $L_p(\Sigma, \hat
\Sigma_\eta)$ and $L_p(\Sigma, \hat \Sigma_{\eta,r})$ are
asymptotically equivalent.
They consider a large class of loss functions which satisfy a
Lipschitz condition which implies that, for some $q$,
\begin{equation*}
  | L_p(\Sigma, \hat \Sigma_{\eta}) - L_p(\Sigma, \hat
  \Sigma_{\eta,r})| 
   \leq C(\ell_1, \eta(\check \lambda_1))
         \sum_{j=r+1}^p [\eta(\check \lambda_j, c_p) - 1]^q.
\end{equation*}
Suppose now that  $\Pi:\mathbb{R}^{p}\to\mathbb{R}^{p-r}$ 
is a projection on the span of the $p-r$ unit eigenvectors of
$\Sigma$. 
Let $X = \Pi \check X$ and 
let $\lambda_{1} \geq \dots \geq \lambda_{p-r}$ denote 
the eigenvalues of 
$n^{-1} X X^\top$. By the Cauchy interlacing Theorem 
(e.g. \cite[p. 59]{bhat97}), we have 
\begin{equation}
  \label{eq:interlace} 
\check \lambda_{j}\leq \lambda_{j-r} \qquad \text{for} \  r+1\leq j\leq p,
\end{equation}
where the $(\lambda_{i})_{i=1}^{p-r}$ are the eigenvalues of a white
Wishart matrix  
$W_{p -r}(n,I)$. 
From the monotonicity of $\eta$,
\begin{equation}
  \label{eq:bd}
  \sum_{j=r+1}^p [\eta(\check \lambda_j, c_p) - 1]^q
   \leq 
  \sum_{i=1}^{p-r} [\eta(\lambda_i, c_p) - 1]^q.
\end{equation}
Now apply part (a) of Theorem \ref{prop:wish-case} with the identifications
\begin{equation*}
  N \leftarrow p-r, \qquad \qquad c_N \leftarrow c_p.
\end{equation*}
Clearly $\gamma_N = N/n \to \gamma$ and
\begin{equation*}
  c_N - \gamma_N = \frac{N+r}{n} - \frac{N}{n} = O(N^{-2/3}),
\end{equation*}
since $r$ is fixed.
We conclude that the right side of \eqref{eq:bd} and hence
$| L_p(\Sigma, \hat \Sigma_{\eta}) - L_p(\Sigma, \hat
  \Sigma_{\eta,r})|$ converge to $0$ in $L_1$ and in probability.

Part (c) of Theorem \ref{prop:wish-case} helps to give an example
where the losses $L_p(\Sigma, \hat
\Sigma_\eta)$ and $L_p(\Sigma, \hat \Sigma_{\eta,r})$ are \textit{not}
asymptotically equivalent.
Indeed, let $L_p(\Sigma, \hat{\Sigma}_\eta) = 
\| \hat \Sigma^{-1}_\eta - \Sigma^{-1} \| $, with $\| \cdot \|$
denoting matrix operator norm. 
Here the optimal shrinkage rule $\eta = \eta^*(\lambda,c)$ is
discontinuous at the upper bulk edge $\lambda(c) = (1 + \sqrt c)^2$:
\begin{alignat*}{2}
   \eta^*(\lambda,c) & = 1 \qquad & \text{for } \lambda & \leq
   \lambda(c) \\
   \eta^*(\lambda,c) & \to 1 + \sqrt c \qquad & \text{for } \lambda 
        & \downarrow  \lambda(c).
\end{alignat*}
Proposition 3 of \citet{dgj15} shows that 
\begin{equation}
  \label{eq:O-2}
    \| \hat \Sigma^{-1}_\eta - \Sigma^{-1} \|
  - \| \hat \Sigma^{-1}_{\eta,r} - \Sigma^{-1} \|
  \stackrel{\mathcal{D}}{\to} W,
\end{equation}
where $W$ has a two point distribution $(1-\pi) \delta_0 + \pi
\delta_w$ with non-zero probability 
\newline $\pi = \Pr ( TW_1 > 0)$ at
location
$w = f(\ell_+) - f(\ell_r)$, where 
$\ell_+ = 1 + \sqrt c$ and the function
\begin{equation*}
  f(\ell) =  \left[ \frac{c (\ell-1)}{\ell(\ell -1+\gamma)}
  \right]^{1/2} 
\end{equation*}
is strictly decreasing for $\ell \geq \ell_+$.

Part (c) of Theorem \ref{prop:wish-case}, along with interlacing
inequality \eqref{eq:interlace}, is used in the proof to establish
that $N_n = \# \{ i \geq r+1 : \lambda_{in} > \lambda_+(c_n) \} $, the
number of noise eigenvalues exiting the bulk, is bounded in probability.

\section{Final Remarks}
\label{sec:final-remarks}


It is apparent that the same methods will show that the value of $c_0$
for the Gaussian Orthogonal Ensemble
 will be the same as for the real Wishart 
(Laguerre Orthogonal Ensemble), and similarly
that the value of $c_0$ for the white complex Wishart 
(Laguerre Unitary Ensemble) will agree
with that for GUE. 

Some natural questions are left for further work. First, the
evaluation of $c_0$ for values of $\beta$ other than $1$ and $2$, and
secondly universality, i.e. that the limiting constants
do not require the assumption of Gaussian matrix entries.

\bigskip

\medskip 
Finally, this article appears in a special issue dedicated to
the memory of Peter Hall.  Hall's many contributions to high
dimensional data have been reviewed by \citet{samw16}. However, it
seems that Peter did not publish specifically on problems connected with
the application of random matrix theory to statistics --- the exception
that proves the rule of his extraordinary breadth and depth of
interests.  Nevertheless the present author's work on this specific topic, as
well as on many others, has been notably advanced by Peter's support
--- academic, collegial and financial -- in promoting research visits
to Australia and contact with specialists there in random matrix
theory, particularly at the University of Melbourne, Peter's academic
home since 2006.



\bibliographystyle{plainnat}

\bibliography{proof}
\end{document}